\newcommand{\reals}{\ensuremath \mathbb R}
\newcommand{\diag}{\ensuremath \Lambda}
\newcommand{\SSIO}{ SSIO\ }
\newcommand{\trace}{\ensuremath \text{trace}}
\newcommand{\s}{\ensuremath \mathbf{s}}
\newcommand{\m}{\ensuremath \mathbf{m}}
\newcommand{\y}{\ensuremath \mathbf{y}}
\newcommand{\z}{\ensuremath \mathbf{z}}
\newtheorem{theorem}{Theorem}
\title{\LARGE \bf
Design of Experiments with Imputable Feature  Data: An Entropy-Based Approach
}
\author{Raj K. Velicheti, Amber Srivastava and Srinivasa M. Salapaka 
\thanks{This work was supported by DOE Subaward (WPI) 10809-GR, and the UIUC-ZJUI Center for Adaptive, Resilient Cyber-Physical Manufacturing Networks. The  authors  are  with the Coordinated  Science  Laboratory, University of Illinois at Urbana-Champaign,  IL,  61801  USA. {\tt\small $\{$rkv4,asrvstv6,salapaka$\}$@illinois.edu }}%

}
\begin{document}

\maketitle
\thispagestyle{empty}
\pagestyle{empty}

\begin{abstract}Tactical selection of experiments to estimate an underlying model is an innate task across various fields. Since each experiment has costs associated with it, selecting statistically significant experiments becomes necessary. Classic linear experimental design deals with experiment selection so as to minimize (functions of) variance in estimation of regression parameter. Typically, standard algorithms for solving this problem assume that data associated with each experiment is fully known. This isn't often true since missing data is a common problem. For instance, remote sensors often miss data due to poor connection. Hence experiment selection under such scenarios is a widespread but challenging task. Though decoupling the tasks and using standard \textit{data imputation} methods like matrix completion followed by experiment selection might seem a way forward, they perform sub-optimally since the tasks are naturally interdependent. Standard design of experiments is an NP hard problem, and the additional objective of imputing for missing data amplifies the computational complexity. In this paper, we propose a maximum-entropy-principle based framework that simultaneously addresses the problem of design of experiments as well as the imputation of missing data. Our algorithm exploits homotopy from a suitably chosen convex function to the non-convex cost function; hence avoiding poor local minima. Further, our proposed framework is flexible to incorporate additional application specific constraints. Simulations on various datasets show improvement in the cost value by over $60\%$ in comparison to benchmark algorithms applied sequentially to the imputation and experiment selection problems.
\end{abstract}

\section{INTRODUCTION}\label{section: introduction}
  
Parametric models are pivotal for analysis, design, control, and optimization in many studies and applications such as textile manufacturing\cite{durakovic2013continuous}, food \cite{yu2018design}, energy\cite{schlueter2018linking} and pharmaceutical\cite{paulo2017design} industries. Often, the unknown parameters in these models are estimated using experimental data. However, in many of these applications, running experiments are expensive and time-consuming. Therefore, it is very important to carefully select few experiments to be performed while simultaneously ensuring reliable parameter estimation. Model-Based Design of Experiments (DoE)\cite{pukelsheim2006optimal} introduces statistical methods which pose and solve appropriate optimization problems that strategically select experiments from a large set of possible experiments. These selection problems are also common in other contexts, for instance, the application areas that use learning tasks such as active learning\cite{settles2012active}, multi-arm bandits\cite{deshpande2012linear}, diversity sampling\cite{kulesza2012determinantal} require effective selection of input data. 

\begin{figure}[tb]
\centering
\includegraphics[scale=0.1,width=0.5\columnwidth]{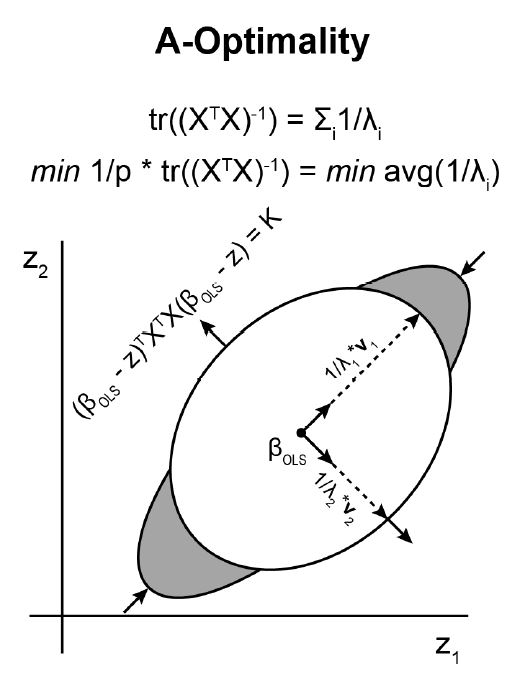}
\caption{Illustrates effect of A-optimality criteria on decreasing variance in estimation of $\beta$, considering a linear model of the form $Y=X\beta+\epsilon$. This variance in estimation is given by $\Sigma^{-1}=(X^\top   X)^{-1}$. In particular, A-optimality minimizes the average inverse singular value($\lambda$). The arrows indicate directions of shrinkage of the ellipse as the criteria is optimized.}
\label{fig: aoptim}
\end{figure}

Linear models of the form $y=f^\top(\z)\beta$, where the measured output depends linearly on the parameters $\beta$ (not necessarily linear on the process variables $\z$) are very common in many of the above applications. DoE for these linear models is critical since they are extensively used, are amenable to simpler interpretations, and offer tractable solution to the experiment selection problem \cite{montgomery2021introduction},\cite{jiao2020does},\cite{lu2017application}. The underlying principle for these statistical techniques is to select a small subset from a pre-specified set $\Omega$ of process variables  $\{\z_i\}_{i=1}^n$ at which the experiments need to be conducted to obtain a reasonable estimate $\hat \beta$ of $\beta\in\reals^p$ such that $f^T(\z)\hat\beta$ is a good (low variance) estimation of $y$ for all $\z$ in $\Omega$.  More precisely, a case with $r$ experimental observations $\y\in \mathbb{R}^r$ at process values $\{\z_i\}_1^r$ ($r\ll n$)  can be written as $\y=X\beta+\epsilon$, where $X\in\reals^{r\times p}$, $f(\z_i)^\top$ is the $i$th row of $X$, and $\epsilon$ is the error vector (typically assumed to be realizations of independent and identically distributed (iid) random variables). 

A least square estimate for this regression problem is given by $\hat\beta=(X^\top    X)^{-1}X^\top\y$ and the variance $var(\hat\beta)=\sigma^2(X^\top    X)^{-1}$, where $\sigma^2$ is the variance of $\epsilon$.  Also the predicted response is $\hat \y(\z)=f^\top(\z)\hat\beta$ with variance $var(\hat \y(\z))=\sigma^2f^\top   (\z)(X^\top X)^{-1}f(\z)$. The DoE problem deals with choosing an appropriate choice of  $r$ process values $\{\z_i\}_{i=1}^r$ from a larger  set $\Omega$, such that the corresponding Fisher information matrix $\Sigma=X^\top    X$ \cite{rissanen1996fisher}  ensures small variance in predicted response; here $r$ is known a priori and is small enough to be cost and time effective. 

In particular, various optimality criteria $g(\Sigma)\in \mathbb{R}$ are used for DoE \cite{pukelsheim2006optimal};  for instance $A$-optimality criterion requires parsimonious selection of ${\z_i}$ such that $g(\Sigma)=\text{trace}(\Sigma^{-1})$ is minimized (See Figure \ref{fig: aoptim}). Other such criteria include D-optimality (minimize $\text{determiniant}(\Sigma^{-1}$), T-optimality (maximize $\trace(\Sigma)$), E-optimality (minimize ${\text{maximum eigenvalue}(\Sigma)}$), V-optimality (minimize average prediction variance),  and G-optimality (minimize worst possible prediction variance) \cite{pukelsheim2006optimal}. These optimization  problems are combinatorial in nature and are known to be NP hard\cite{civril2009selecting}.

These problems become even more complex when  not all process variable values are  preset but need to be chosen or designed. More precisely, in the context of linear regression models considered above, when some elements in the matrix $X$ are themselves parameters to be designed (chosen typically from a continuous domain set). These problems arise in DoE applications where not all process variables $\z_i\in \Omega$ are  determined  a priori, but  are free to be chosen. For instance if $y$ represents taste-quality score of a certain wine and $f_k(\z)$ represents a specific feature (among many other features)  – say that depends on temperature of the room  at which wine is served; the temperature values can be treated as a design variable. Similar objectives also arise in certain missing-data problems, where values for $f_k(\z_i)$ (for some ($k,i$)) are missing and these values need to be imputed.  Missing of data can occur due to various reasons such as unreliable sensors\cite{yick2008wireless}, system malfunction\cite{lo2011progressive}\cite{fletcher2004estimation} or incomplete surveys\cite{raghunathan2004we}. While range of missing data points are known, exact values might still be missing. This is widespread in many research areas and has adverse affects on final model estimation\cite{kromrey1994nonrandomly,ayilara2019impact,marlin2008missing}. The corresponding complexity of DoE problem increases since solving for the above optimality criteria will require solving {\em  simultaneously} the selection problem as well as designing these parameters.

There are many approaches that address individually the optimal DoE problem and missing data imputations but there are none to our knowledge that solve them simultaneously.  Methods specifically developed for DOE that apply to various optimality criteria include  heuristic algorithms such as genetic algorithm \cite{exchangealgo},\cite{heredia2003genetic} and  Fedorov’s exchange\cite{miller1994algorithm} or convex relaxations \cite{pmlr-v70-allen-zhu17e};  some approaches provide better analytical guarantees for specific criteria such as    $T-$ and $D$-optimality \cite{pmlr-v48-ravi16,5671202,li2017polynomial }, or   A-optimality \cite{avron2013faster,JMLR:v18:17-175,nikolov2019proportional,derezinski2018leveraged}. On the other hand imputing values to missing-data problems are generally addressed by using surrogates such as  mean or mode of the data \cite{eekhout2014missing,zhang2016missing,malhotra1987analyzing}; or estimating missing values using maximum likelihood methods\cite{dempstermaximal}. There is no existing literature on the coupled selection-imputation problems.  These problems are nonconvex typically with multiple local minima. Simulations show that standard optimization techniques, such as  interior points method, gets stuck at poor local minima.   Using exchange algorithms\cite{nguyen1992review} to solve this coupled objective becomes computationally intensive as they need to rely on discretized values for continuous variables  and the search space multiplies for every missing value. 

In this work, we pose a combined objective function optimizing for both designable (missing) feature values and experiment selection. In this article, we use $A$-optimality, that is,  $\trace(\Sigma^{-1})$ as the optimality criterion.  We design a flexible solution approach, which in addition to solving the combined objective, can easily incorporate a wide range of constraints on the decision variables. The main heuristic in our solution is casting the problem from the viewpoint of maximum-entropy-principle (MEP) \cite{jaynes1957information,rose1998deterministic, dubey2018maximum,gehler2007deterministic,rao1997design, srivastava2020simultaneous,gull1984maximum, baranwal2019multiway,banerjee2007generalized,xu2014aggregation}. In this viewpoint, we ascribe a probability distribution on the space of combinatorial selections, and determine expected cost function parameterized by design (imputable) values. We  determine this probability distribution  and design values in an iterative process, where a distribution with maximum entropy is computed at each iteration as an upper bound on the expected cost function, is successively decreased. The imputable values at each iteration are simultaneously obtained by minimizing the corresponding unconstrained Lagrangian.   
Our primary contributions in this work include,
\begin{itemize}
    \item Developing a framework for Simultaneous Selection-Imputation Optimization (SSIO) problem.
    \item Posing this problem in MEP framework and developing an iterative algorithm to solve it.
    \item Demonstrating that the iterations in our algorithm mimic a descent method and hence converge to a local minimum.
\end{itemize} 
We also show that our proposed MEP based approach can be adapted to various other linear (non-linear) design optimality criteria. Further, we show a systematic procedure to incorporate application-specific constraints (such as resource capacity constraint) into the SSIO problem. 

Our simulations show empirical success of the developed algorithm. We test our algorithm on multiple randomly generated data. We show over $3.2$ times improvement in comparison to (the standard) mean data imputation for missing data followed by the Fedorov's algorithm \cite{miller1994algorithm} for experiment selection, and over $4.7$ times improvement in comparison to mean data imputation followed by random sampling of experiments \cite{pmlr-v70-allen-zhu17e}. Further, we expound the benefits of {\em annealing} (an integral part of our proposed algorithm described in Section \ref{section: Solution Approach}) by comparing it with the standard optimization techniques (such as interior-point, and Trust-region reflective algorithm \cite{byrd1999interior}) that simultaneously solve for missing data imputation, and experiment selection. In particular, here we show that our algorithm results into a lower cost function value that is $0.57$ times the cost from the above direct methods. The paper is organised as follows, Section \ref{section: Problem Formulation} will deal with problem formulation and modifying it for MEP framework, Section \ref{section: Solution Approach} deals with developing an iterative algorithm to solve for this objective, this is followed by simulations in Section \ref{Section: Simulation} and discussion in Section \ref{section: discussion and analysis}.

\section{Problem Formulation}\label{section: Problem Formulation}

We consider a linear model $y=f^\top   (\z)\beta+\epsilon$, where  $\z$ takes values from a set $\Omega=\{\z_i\}_{i=1}^n$, and $\beta\in\reals^p$ is the parameter vector. We define $X\in\reals^{n\times p}$ such that its $k$th row $\mathbf{x}_k^\top =f^\top (\z_k)$. A model-based $A$-optimality DoE problem requires forming a matrix $X_s\in\reals^{r\times p}$ by appropriately selecting $r\ll n$ rows (experiments) in $X$ such that $\trace\left(\Sigma_s^{-1}\right)$ is minimized, where $\Sigma_s=[X_s^\top    X_s]$ is the Fisher Information matrix. The space of such combinatorial selections are described by the set $S\subset \{0,1\}^n$, where every $\s=\{s_i\}_{i=1}^n\in S$ satisfies $\sum_i s_i=r$, that is
\begin{align}
    S=\{\mathbf{s}:\mathbf{s}=\{s_i\}_{i=1}^n,s_i\in\{0,1\},\sum_i s_i=r\}\label{eq: setS}.
\end{align}
To each selection vector $\s$, we can determine a matrix $X_s\in\reals^{r\times p}$ by deleting all $j$th rows for which $s_j=0$. The corresponding Fisher information matrix can be written as 
\[\Sigma_s=X_s^\top    X_s=X^\top    \diag(\s) X,\] where the $\diag(s)\in \{0,1\}^{n\times n}$ is a diagonal matrix   with $\s$ as its diagonal.

As illustrated in Section \ref{section: introduction}, in this work, we consider an additional objective of imputing feature data while solving A-optimality criterion.  Specifically, let $\textbf{m}\in M$ denote designable or missing feature entries in matrix $X$. Here 
\begin{align}
M=\{\m:\ \m=\text{vec}\left(x_{ij}\right);x_{ij}\in[a_{ij},b_{ij}]\forall(i,j)\in\mathcal{G}\}\label{eq: setM},
\end{align}
and $\mathcal{G}$ specifies the locations of missing data in matrix $X$. 
We denote the input data matrix with missing values $\textbf{m}\in\mathbb{R}^{|\mathcal{G}|}$ by $X(\textbf{m})\in\mathbb{R}^{n\times p}$. Consistent with related literature \cite{nikolov2019proportional,derezinski2018leveraged,pmlr-v70-allen-zhu17e} in linear design of experiments, we assume $X(\m)$ to be of full rank (equal to $p$) and hence $X(\m)^\top X(\m)$ is invertible for all $\m$. The\SSIO problem is given by 

\begin{align}
\label{Eq: original objective}
    \min_{\textbf{s}\in S,\textbf{m}\in M}\quad &\trace\left(\left[X(\textbf{m})^\top   \diag(\textbf{s})X(\textbf{m})\right]^{-1}\right).
\end{align}
 To ensure the invertibility of $(X(\textbf{m})^\top   \diag(\textbf{s})X(\textbf{m}))$, a necessary condition is that the number of selected rows must at least be equal to the number of features in each experiment $i.e\quad r\geq p$. 

In our framework we reformulate the optimization problem (\ref{Eq: original objective}) by changing the decision variable space. We introduce a binary-valued probability distribution $\eta:S\rightarrow \{0,1\}$ on the space of selections  such that $\sum_S\eta(\textbf{s})=1$. Specifically, it is a distribution which takes a unit value at a particular selection of experiments i.e at a vector $\s\in S$ and zero otherwise. Hence the equivalent optimization problem is given by
\begin{align}
    &\min_{\eta,\mathbf{m}\in M}\quad\text{trace}\Big(\Big[\sum_{\mathbf{s}\in S}\eta(\mathbf{s})\left(X(\mathbf{m})^\top   \diag(\mathbf{s})X(\mathbf{m})\right)\Big]^{-1}\Big)\label{eq: etaoptimization}\\
    &\text{subject to}\ 
    \eta(\mathbf{s})\in\{0,1\}\quad \quad\sum_S\eta(\mathbf{s})=1.\label{eta constraint}
\end{align}
In our MEP based framework we relax this binary decision variable $\eta(\textbf{s})$, and replace it with association weight $p(\textbf{s})\in[0,1]$ such that $\sum_Sp(\textbf{s})=1$. The resulting relaxed cost function $\mathcal{A}$ is given by
 \begin{align}\label{eq: final objective}
 \begin{split}
     \mathcal{A}&=\trace\Big(\big(\sum_S p(\textbf{s})X(\textbf{m})^\top   \diag(\textbf{s})X(\textbf{m})\big)^{-1}\Big).\\
     &=\trace\Big(\big(\underbrace{\sum_S p(\textbf{s})\sum_{i=1}^ns_ix_i(\m)x_i(\m)^T}_{D}\big)^{-1}\Big),
 \end{split}
 \end{align}
where $x_i(\m)$ denote the $i$-th row of the $X(\m)$ data matrix. For simplicity of notation we denote $x_i(\m)$ by $x_i$ wherever clear from the context. Subsequently, we use MEP to design these association weights $p(\s)$ as illustrated in the next section.

\section{Solution approach}\label{section: Solution Approach}
In the proposed approach instead of directly solving for (\ref{eq: etaoptimization}) we use MEP based algorithm to solve for the distribution $p(\textbf{s})$ which forms a part of the relaxed cost function (\ref{eq: final objective}). In particular, we solve for the distribution which has maximum entropy while the relaxed cost function $\mathcal{A}$ in (\ref{eq: final objective}) attains a given value $a_0>0$. Intuitively, given a prior information ($\mathcal{A}=a_0$), MEP determines the \textit{most unbiased} distribution and hence maximizes Shannon Entropy $H$. We pose the following optimization problem,
\begin{align}\begin{split}\label{eq:MEP}
 \max_{p(\textbf{s}),\textbf{m}\in M}H&=-\sum_Sp(\textbf{s})\log p(\textbf{s})\\
    \text{subject to}& \quad\mathcal{A}=a_0\\
    &p(\s)\geq 0,
    \  \text{for all}\ \s\in S \ \text{and }\  
   \sum_S p(\s)=1,
    \end{split}
\end{align}
The corresponding Lagrangian to be minimized is  given by  
\begin{align}\label{eq: initial lag}
    L=\mathcal{A}-a_0-TH,
\end{align}
where $T$ denotes Lagrange multiplier. In our framework, we repeatedly solve the above optimization problem at successively decreasing values of $a_0$; as illustrated shortly. 

Note that the set $S$ is combinatorially large (i.e., {\small$|S|=\binom{n}{r}$}). Thus our decision variable space $p(\textbf{s})$ becomes intractable for large values of $n\choose r$. We reduce the search space by incorporating an assumption that $\{s_i\}$ are independent variables which introduces an additional term into the Lagrangian $L$ as shown later. More precisely, we dissociate the distribution $p(\cdot)$ over combinatorially large space $S$ into $p_i(\cdot)$ over individual entries $s_i\in\textbf{s}$, as given by
\begin{align}\label{eq: dissaociate probabilities}
    p(\textbf{s})&=p(s_{1},s_2,\hdots,s_n)=\Pi_i^np_i(s_i).
\end{align}
This assumption of independence  reduces the number of decision variables from $n\choose r$ to $n$ variables as shown below. 
Substituting the above dissociation (\ref{eq: dissaociate probabilities}) into the expression $D$ in the relaxed cost (\ref{eq: final objective}), we obtain
\begin{align}\label{eq:indepD}
     \sum_{\textbf{s}\in S}p(\textbf{s})(\sum_{i=1}^N s_ix_ix_i^\top   )&=\sum_{\textbf{s}\in S}\sum_{i}\prod_{j=1}^n p_j(s_j)s_ix_ix_i^\top   \\
&=\sum_{i=1}^n\sum_{s_i}p_i(s_i)s_ix_ix_i^\top   
\end{align}
Similarly, using (\ref{eq: dissaociate probabilities}) to modify $H$ in (\ref{eq:MEP}), we obtain
\begin{align}\label{eq:indepH}
    H &= - \sum_S p(S)\log p(S)= - \sum_S p(S) \log \prod_{i}p_{i}(s_{i})\\
&= - \sum_{i} \sum_{s_{i}}p_{i}(s_{i})\log p_{i}(s_{i})
\end{align}

For simpler notation, we introduce and define a new variable $q_i:=p_i(s_i=1)$ for each $1\leq i\leq n$; This definition implies  $p_i(s_i=0)=1-q_i$ for each $i$. Therefore,
using (\ref{eq:indepD}) and (\ref{eq:indepH}) in (\ref{eq: initial lag}),  the Lagrangian $L(\mathbf{q},\mu,\m)$ for the missing data A-optimality problem is given by
\begin{align}\label{eq: final lag}
\begin{split}
    L(\mathbf{q},\mu,\m)&=\trace(\sum_i^n q_ix_ix_i^\top)^{-1}-a_0+T\sum_{i} \big[q_{i}\log q_{i}\big]\\ &+ T\big[(1-q_{i})\log (1-q_{i})\big]
    +\mu(\sum_iq_i-r),
    \end{split}
\end{align}
where the last term of this Lagrangian is to account for the constraint of selecting $r$ rows from $n$. 

In equation (\ref{eq: final lag}), L is referred to as free energy and T as temperature borrowing analogies from statistical physics which defines free energy as difference between enthalpy and temperature times entropy. At large values of temperature T, this free energy optimization reduces to minimizing a convex function and hence results in global minimum. As T is annealed (decreased in value), higher weight is given to the relaxed cost function $\mathcal{A}$ changing the Lagrangian from a convex to non-convex function. Finally, as $T\rightarrow 0$, the objective reduces to solving for A-optimality.

We solve for $q_i$ which minimizes $L$ by setting $\frac{\partial L}{\partial q_i}=0$. This results in the following update for $q_i$ \begin{align}
     q_i = \frac{1}{1+\exp\Big\{-\frac{1}{T}x_i^\top R^{-2}x_i + \frac{1}{T}\mu\Big\}},\label{eq: q update}
\end{align}
where $R=\sum_iq_ix_ix_i^\top $. Similarly, to solve for $\mu$, we exploit the fact that $\sum_iq_i=r$, and use $q_i$ from (\ref{eq: q update}), which results in the update of the form
\begin{align}\begin{split}
    \mu&=T\log(\frac{K}{r}),\\ \text{where }\quad K&=\sum_i\frac{1}{\exp\{-\frac{1}{T}\mu\}+\exp\{-\frac{1}{T}x_i^\top R^{-2}x_i\}}.\end{split}\label{eq: mu update}
\end{align}
Finally, we determine the missing values $\m$ (when unconstrained) by setting $\frac{\partial L}{\partial x_{jk}}=0$, where $x_{jk}$ are the entries of the vector $\m\in M$. In particular, 
\begin{align}
    \frac{\partial L}{\partial x_{jk}}&=x_j^\top R^{-2}e_k,\label{eq: xstep}\\
   \frac{\partial L}{\partial x_{jk}}=0&\implies x_{jk}=-\frac{1}{e_k^TR^{-2}e_k}\sum_{l\neq k}x_{jl}e_l^\top R^{-2}e_k,\label{eq: m_update}
\end{align}
where $R=\sum_i q_ix_ix_i^T$, and $e_k\in\mathbb{R}^p$ is a basis vector with value $1$ at the $k$-th location, and $0$ otherwise. It can be seen that the updates for $\textbf{q},\mu$ and $\textbf{m}=\text{vec}(x_{ij}=[X]_{ij}:(i,j)\in\mathcal{G})$ in (\ref{eq: q update}), (\ref{eq: mu update}), and (\ref{eq: m_update}), respectively, are implicit and depend on each other. Hence, in our algorithm we perform the above iterates simultaneously at each value of the annealing parameter $T$. In fact, through the following theorem we show that our iterates in (\ref{eq: q update}), (\ref{eq: mu update}), and (\ref{eq: m_update}) mimic a descent method; and thus, guarantee convergence to a local minimum with appropriate step sizes.
\begin{theorem}
(a) The implicit equation $q_i$ in (\ref{eq: q update}) corresponds to gradient descent step in the auxiliary variable $\xi_i:=-\log\frac{q_i}{1-q_i}$, or equivalently, $q_i = \frac{e^{-\xi_i}}{1+e^{-\xi}}$, where the descent step is given by
\begin{align}
\xi_i^+=\xi_i-T(\exp(\xi_i/2)+\exp(-\xi_i/2))^2\frac{\partial L}{\partial \xi_i},
\end{align}
\\
(b) The implicit equation $\mu$ in (\ref{eq: mu update}) is analogous to the gradient descent step
\begin{align}
    \mu^+=\mu-\frac{T}{\Bar{k}}\frac{\partial L}{\partial \mu},
\end{align}
where $\bar{k}>0$ lies between $\sum_i q_i$ and $r$.
\end{theorem}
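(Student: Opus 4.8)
The plan is to recognize both parts as instances of a single idea: each implicit fixed-point update is a \emph{preconditioned} gradient step on the free energy $L$ in \eqref{eq: final lag}, the preconditioner being the Jacobian of a change of variables. The common starting point is to compute the partials of $L$. Differentiating the three terms of \eqref{eq: final lag} gives $\partial L/\partial q_i = -x_i^\top R^{-2} x_i + T\log\frac{q_i}{1-q_i} + \mu$, where the trace--inverse term contributes $-x_i^\top R^{-2} x_i$ (using $\partial_\theta\,\trace(R^{-1}) = -\trace(R^{-1}(\partial_\theta R)R^{-1})$ with $\partial R/\partial q_i = x_i x_i^\top$), the entropy term contributes $T\log\frac{q_i}{1-q_i}$, and the constraint term contributes $\mu$. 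Setting this to zero already recovers \eqref{eq: q update}, showing that the stationary points of $L$ in $q_i$ and the fixed points of the update coincide.

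For part (a), I would introduce the logit coordinate $\xi_i = -\log\frac{q_i}{1-q_i}$, so $q_i = (1+e^{\xi_i})^{-1}$, and compute the Jacobian $dq_i/d\xi_i = -q_i(1-q_i)$. The key algebraic identity is $q_i(1-q_i) = (\exp(\xi_i/2)+\exp(-\xi_i/2))^{-2}$, obtained by writing $q_i(1-q_i) = e^{\xi_i}/(1+e^{\xi_i})^2$ and expanding the square. By the chain rule $\partial L/\partial \xi_i = -q_i(1-q_i)\,\partial L/\partial q_i$, so the prefactor $(\exp(\xi_i/2)+\exp(-\xi_i/2))^2$ in the step cancels $q_i(1-q_i)$, reducing the update to $\xi_i^+ = \xi_i + (\mathrm{const})\,\partial L/\partial q_i$. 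Substituting $T\log\frac{q_i}{1-q_i} = -T\xi_i$ into $\partial L/\partial q_i$ and collecting the $\xi_i$ terms, the old $\xi_i$ cancels and what remains is exactly $\tfrac{1}{T}(\mu - x_i^\top R^{-2} x_i)$, which equals $-\log\frac{q_i^+}{1-q_i^+}$ read off from \eqref{eq: q update}. Hence the descent step reproduces the implicit update.

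For part (b), I would first record $\partial L/\partial \mu = \sum_i q_i - r$ directly from the constraint term of \eqref{eq: final lag}. The crux is a rewriting of \eqref{eq: mu update}: multiplying numerator and denominator of \eqref{eq: q update} by $e^{-\mu/T}$ shows $q_i = e^{-\mu/T}\big(e^{-\mu/T} + e^{-x_i^\top R^{-2} x_i/T}\big)^{-1}$, whence $\sum_i q_i = e^{-\mu/T} K$ with $K$ as in \eqref{eq: mu update}. Feeding this into $\mu = T\log(K/r)$ collapses the update to the transparent form $\mu^+ = \mu + T\log\big((\sum_i q_i)/r\big)$. The final step is a mean value theorem argument: writing $\log(\sum_i q_i) - \log r = \frac{1}{\bar k}\big(\sum_i q_i - r\big)$ for some $\bar k$ strictly between $\sum_i q_i$ and $r$ (the derivative of $\log$ at $\bar k$ being $1/\bar k$) yields $\mu^+ = \mu \pm \frac{T}{\bar k}\,\partial L/\partial \mu$, a gradient step with step size $T/\bar k$.

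I expect the delicate point in part (a) to be the exact matching of constants: the cancellation that removes the old $\xi_i$ and leaves only the affine term hinges on the entropy coefficient $T$ lining up with the preconditioner, so the step-size prefactor (and the precise power of $\exp(\pm\xi_i/2)$) must be tracked carefully. In part (b) the analogous subtlety is purely the mean value theorem: one must verify that $\bar k$ lies between $\sum_i q_i$ and $r$ and is therefore strictly positive, so that $T/\bar k$ is a legitimate positive step size and the sign of the step drives $\sum_i q_i$ toward $r$ --- which is exactly the descent (constraint-restoring) behavior the convergence claim relies upon.
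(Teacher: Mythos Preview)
Your approach is correct and essentially identical to the paper's: compute $\partial L/\partial q_i$, pass to the logit variable $\xi_i$ via the chain rule together with the identity $q_i(1-q_i)=(e^{\xi_i/2}+e^{-\xi_i/2})^{-2}$, and for part~(b) collapse the $\mu$-update to $\mu^{+}=\mu+T\log\!\big((\sum_i q_i)/r\big)$ before invoking the mean value theorem on $\log$. Your caution about tracking the sign and the $T$-prefactor is well placed, as the paper's own bookkeeping is loose on exactly those two points.
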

\begin{proof}
See Appendix \ref{app: First}.
\end{proof}
Thus, at each given T, the updates $\textbf{q},\mu$ and $\textbf{m}$ given by (\ref{eq: q update}), (\ref{eq: mu update}) and (\ref{eq: m_update}) respectively, are analogous to the gradient descent step of the form
\begin{align}
\begin{bmatrix}
[\xi_i^+]\\
\mu^+\\
[x_{jk}^+]
\end{bmatrix}=
\begin{bmatrix}
[\xi_i]\\
\mu\\
[x_{jk}]
\end{bmatrix}-
\begin{bmatrix}
[\gamma_i] & 0 & 0\\
0 & \zeta & 0\\
0 & 0 & [\phi_{jk}]
\end{bmatrix}
\begin{bmatrix}
[\frac{\partial L}{\partial \xi_i}]\\
\frac{\partial L}{\partial \mu}\\
[\frac{\partial L}{\partial x_{jk}}]
\end{bmatrix},
\end{align}
where $\gamma_i=T(\exp(\xi_i/2)+\exp(-\xi_i/2))^2$, $\zeta=\frac{T}{\bar{k}}$, and $\phi_{jk}>0$ denotes the step size.

Hence, local minimum at each temperature $T$ is obtained by solving (\ref{eq: q update}) for $q_i$, (\ref{eq: mu update}) for $\mu$, and (\ref{eq: m_update}) for $\m$. However, in the case where the missing data $\m$ are constrained to a domain, we solve for $\textbf{m}$ using standard constrained optimization routines like interior points method \cite{byrd1999interior}. We summarize the above steps in Algorithm \ref{alg: Algorithm1}. As stated before, in our algorithm we anneal the temperature from a large value $T\rightarrow \infty$, where the Lagrangian $L$ is dominated by the negative of Shannon entropy $-H$, and the algorithm results into uniform distribution $q_i=r/n$. As $T$ decreases, more weight is given to the cost function $\mathcal{A}$, and the $q_i$'s are no longer uniform. As $T\rightarrow 0$, the Lagrangian $L$ is dominated by the cost function $\mathcal{A}$, distribution $q_i\rightarrow\{0,1\}$ as can be seen from (\ref{eq: q update}), and we minimize the original objective in (\ref{Eq: original objective}).

\begin{algorithm}
{\textbf{Input:} $X\in\mathbb{R}^{n\times p},T_{\text{init}},\alpha<1$, $\{a_{ij}\}$, $\{b_{ij}\}$, $r$, $\m_{\text{init}}$, $T_{\min}$,$T_{\text{init}}$;\\  
\textbf{Output: }{$\textbf{q}$, $\textbf{m}$}\\
$T\leftarrow T_{\text{init}},\textbf{q}\leftarrow\frac{r}{n},\textbf{m}\leftarrow \m_{\text{init}}$\\
\While{$T>T_{\min}$}{
\While{$\text{until convergence}$}{Update $\mathbf{q},\mu$ as in (\ref{eq: q update}), (\ref{eq: mu update}),\\
Update $\m$ using (\ref{eq: m_update}) (unconstrained), or interior-points methods (constrained case).
}
T=$\alpha$T}
\Return ($\textbf{q}$, $\textbf{m}$)
\caption{Design of experiments and imputation of missing data}\label{alg: Algorithm1}}
\end{algorithm}

It can be seen that direct minimization of the A-optimality objective alone by setting the first derivative of the objective function to zero might be stuck at any of multiple local minima and is tightly dependent on initialization. The underlying idea of our proposed algorithm is to find global minimum at large temperatures T, where the problem is convex and track it as we decrease T. As we complete the annealing, the distribution $q_i$ become {\em hard}, and we minimize the original cost function in (\ref{Eq: original objective}). In the subsequent section we compare our Algorithm \ref{alg: Algorithm1} with sequential benchmark methods to demonstrate the efficacy of our proposed methodology.

\section{SIMULATION}\label{Section: Simulation}
In this section we demonstrate the efficacy of our proposed Algorithm \ref{alg: Algorithm1} in solving the simultaneous experiment selection, and feature data imputation problems underlying the model-based DoE. Algorithm \ref{alg: Algorithm1} takes in $X(\textbf{m})\in\mathbb{R}^{n\times  p}$ and $r$ as inputs, where $\textbf{m}$ (as in (\ref{eq: setM})) denotes the missing data in the matrix $X(\textbf{m})$, and $r$ denotes the number of experiments to be selected. Since there are no existing works which simultaneously address missing data imputation and selection of experiments, we provide comparison of our Algorithm \ref{alg: Algorithm1} with a sequential methodology to solve the above two problems. That is, we (a) first implement a commonly used mean imputation for missing feature data $\mathbf{m}$ \cite{eekhout2014missing}, and (b) subsequently use either simple uniform sampling \cite{pmlr-v70-allen-zhu17e} (with equal probability to sample each row) or standard Federov's exchange \cite{miller1994algorithm} algorithm (which swaps selected rows, one at a time, with the ones available to minimize the objective) for selecting the appropriate set of $r$ experiments. 

\begin{figure}
\centering
\includegraphics[width=0.5\columnwidth]{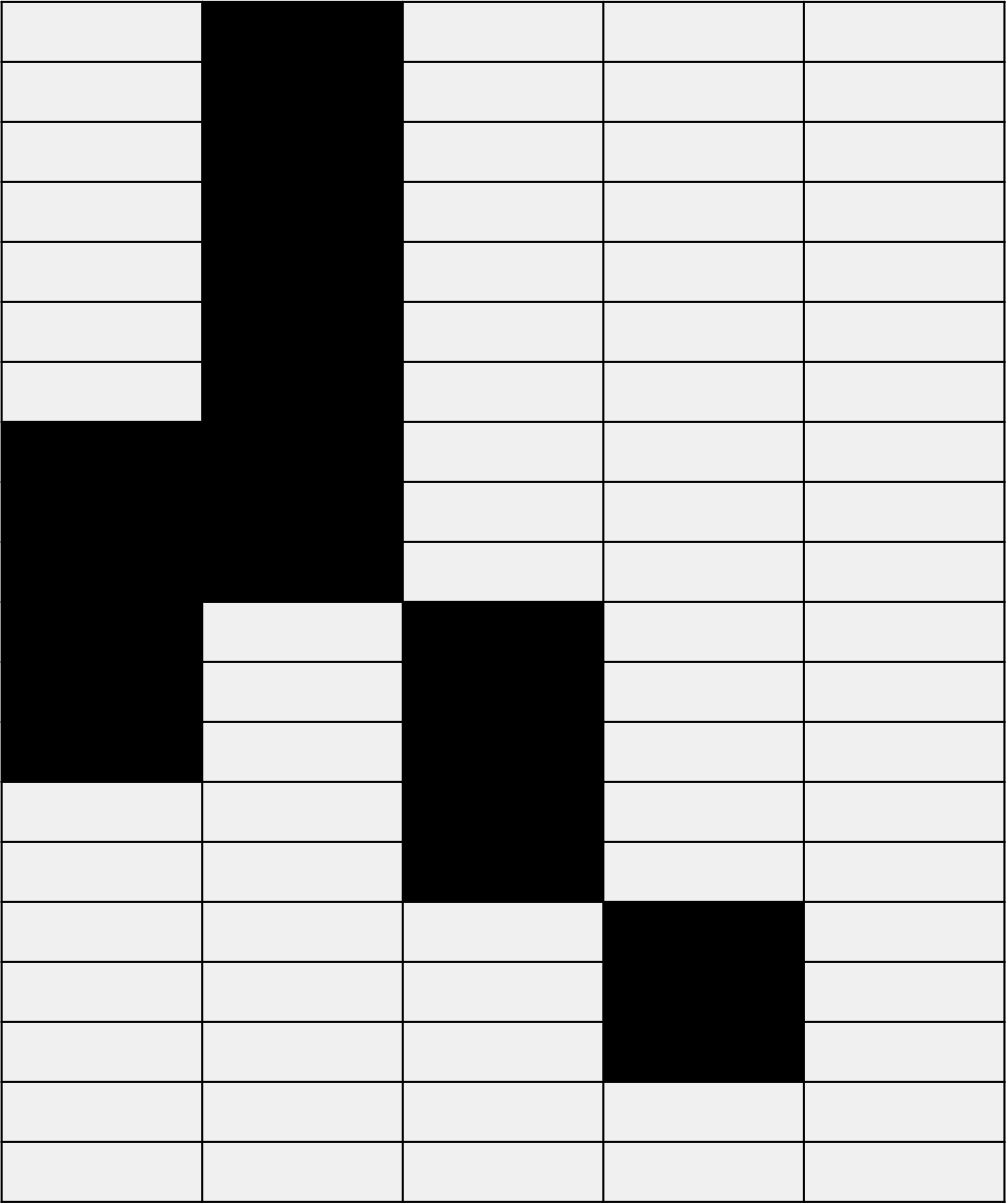}
\caption{Illustrates an example input data matrix $X(\textbf{m})$ with missing data. The darker shade positions indicate missing values and lighter shade denotes a known value in a given range obtained from a random distribution. This example $X\in\mathbb{R}^{20\times 5}$ has $24\%$ missing data.}
\label{fig: missing data}
\end{figure}

In our simulations, we generate incomplete matrices $X(\mathbf{m})$ of various sizes with values missing at randomly selected locations $\mathbf{m}$. Figure \ref{fig: missing data} illustrates an example of such matrices. In the figure, cells marked in darker shade indicate locations of missing values, and those marked in lighter shade denote known entries of the matrix. We evaluate the performance of our Algorithm \ref{alg: Algorithm1}, and compare with the above sequential methodologies on 6 example scenarios. First example E1 generates $X\in\mathbb{R}^{20\times4}$ with $12.5\%$ entries missing from randomly selected locations, and each {\em known} data point in the range of $[-1,2]$. The task is to simultaneously impute for these missing values and select 11 rows (experiments) from $X$. In the second example E2, we create an input data matrix of same size, i.e $X\in\mathbb{R}^{20\times4}$, but with values in range of $[0,4]$ and $10\%$ data missing at random places. The objective here in addition to imputing missing data is to select 12 experiments from the set. Similar examples, E3-E6, are generated to simulate various instances of missing data. See Table \ref{tab:experiments} for details.

\begin{table}[h!]
    \centering
    \begin{tabular}{ |c| c| c| c| c|}
\hline
 No. &$n\times p$ &$\%$ \textbf{m} & r &range of data \\
 \hline
 E1 & $20\times4$ &12.5 &11 &$[-1,2]$\\    
 E2 & $20\times4$ &10 &12 &$[0,4]$\\
 E3 & $20\times4$ &16.25 &12 &$[-2,2]$\\
 E4 & $20\times5$ &24 &11 &$[0,1]$\\
 E5 & $30\times5$ &10 &12 &$[5,10]$\\
 E6 & $30\times5$ &10 &6 &$[5,10]$\\
 \hline
\end{tabular}
        \caption{This table illustrates 6 example simulations we run Algorithm \ref{alg: Algorithm1} and benchmarks on. $n\times p$ indicates size of input matrix $X$, $\%$\textbf{m} indicate the percentage of missing data, r denotes number of rows to be selected, and range of data column indicates the data range of known values. Note that the notation is consistent with ones used in Section \ref{section: Problem Formulation}.}
    \label{tab:experiments}
\end{table}

\begin{figure}
\centering
\includegraphics[width=0.7\columnwidth]{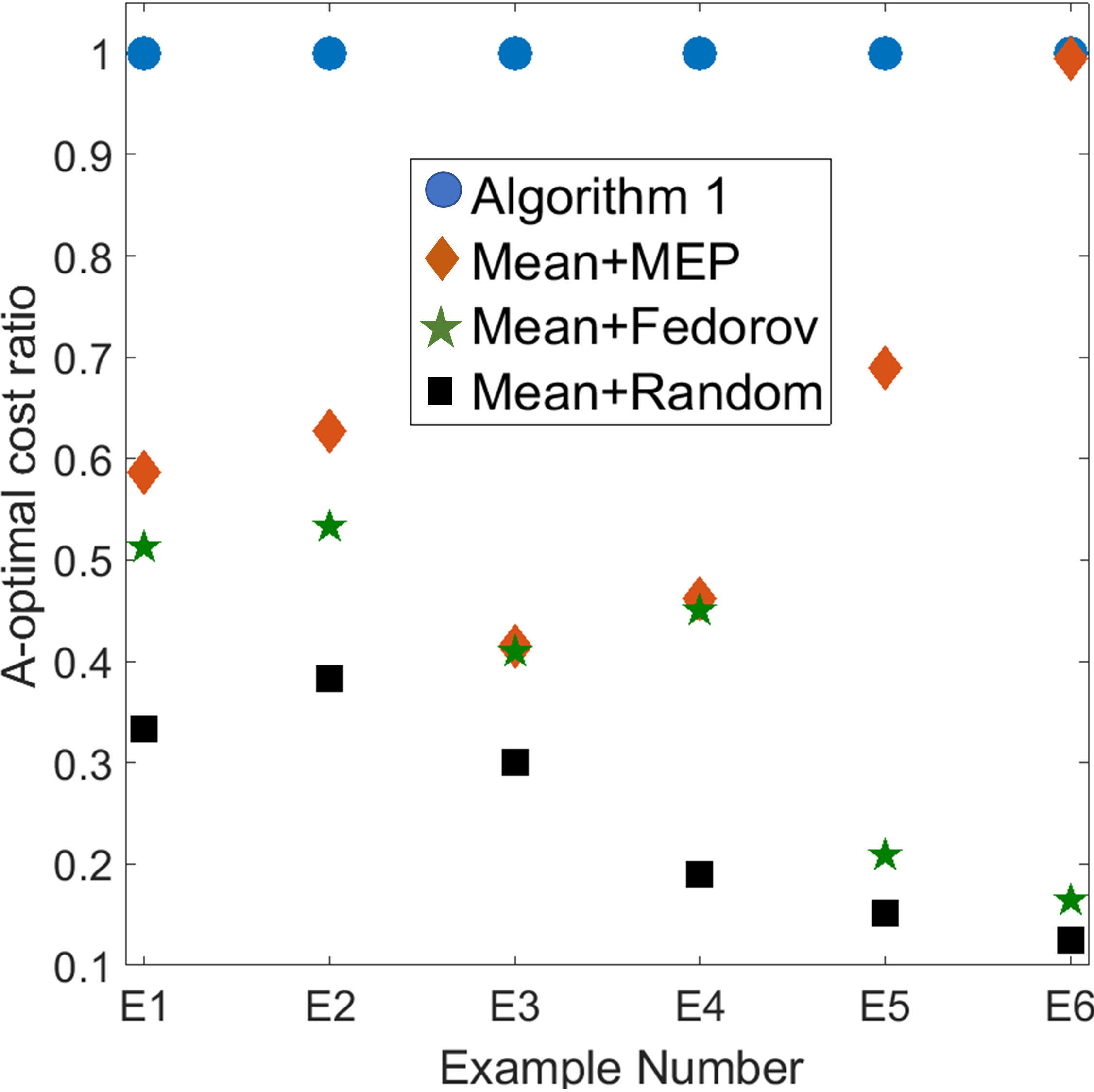}
\caption{Demonstrates the efficacy of Algorithm \ref{alg: Algorithm1} against other benchmarks on examples elaborated in Table \ref{tab:experiments}. For each example on X-axis, we report the ratio of cost obtained from Algorithm \ref{alg: Algorithm1} for solving the combined objective with that obtained from other algorithms on Y-axis (i.e cost as a result of Algorithm \ref{alg: Algorithm1}/cost obtained from benchmark algorithms). As indicated by the legend various shapes of points indicate different benchmarks used. A value less than 1 on Y-axis indicates Algorithm \ref{alg: Algorithm1} results in a cost less than the benchmarks and hence performs better.}
\label{fig: experiments}
\end{figure}

Figure \ref{fig: experiments} illustrates the comparison of the Algorithm \ref{alg: Algorithm1} with the above stated benchmark sequential methodologies. In particular, it plots the ratio of the A-optimal cost incurred using Algorithm \ref{alg: Algorithm1} with the cost incurred when using the sequential methodologies for each of the experiments E1-E6 described in Table \ref{tab:experiments}. Note that the Algorithm \ref{alg: Algorithm1} consistently performs better than the benchmark methods. For instance, in the example E1 our Algorithm \ref{alg: Algorithm1} results into a solution that (a) incurs only $0.6$ times the cost incurred when using mean imputations for missing data followed by our MEP-based algorithm (assuming complete known matrix $X\in \mathbb{R}^{n\times p}$), (b) incurs only $0.52$ times the cost incurred with mean data imputation followed by the standard Fedorovs algorithm, and (c) incurs only $0.35$ times the cost incurred with mean data imputation followed by uniform sampling of experiments. Similarly, in the example E4 our Algorithm \ref{alg: Algorithm1} (a) incurs a cost that is $0.47$ times the cost incurred by mean data imputation followed by our MEP-based algorithm (assuming complete known matrix $X$) to select experiments, (b) incurs a cost that is $0.42$ times the cost incurred when using Fedorovs algorithm for selecting experiments after mean data imputations, and (c) incurs a cost that is $0.2$ times the cost incurred when using uniform sampling for selecting experiments after mean data imputations. Further as seen from Figure \ref{fig: experiments}, given a complete matrix $X$ with imputed values, MEP based Algorithm \ref{alg: Algorithm1} consistently performs better than other standard methods like Fedorov's exchange. Please refer to Figure \ref{fig: experiments} for details on all the other experiments.

To illustrate the benefit of annealing as done in our proposed Algorithm \ref{alg: Algorithm1}, we benchmark our results to that of direct optimization of the posed objective given by (\ref{Eq: original objective}) using standard constrained optimization routine interior points method\cite{byrd1999interior}. On first example, E1, we obtain a cost $0.57$ times lower using Algorithm \ref{alg: Algorithm1} compared to that of of direct optimization. This demonstrates the benefit of annealing in avoiding poor local minimum. An interesting observation while performing the direct optimization using interior points method is that selection of experiments do not change much from the initialization but still, it gives a competitive cost to other algorithms. This is attributed to the imputed missing feature data in a coupled manner and hence strengthens the utility of our proposed problem (\ref{Eq: original objective}).

\section{DISCUSSION AND ANALYSIS}\label{section: discussion and analysis}
Here we highlight some of the features and extensions of our proposed framework. 
\subsection{Generalization to different optimality criteria}
In this work, our MEP-based framework minimizes the A-{\em optimality criteria} in (\ref{Eq: original objective}) to determine the feature data imputations, and select appropriate experiments that ascertain the regression parameter $\beta$. As illustrated in Section \ref{section: introduction}, there are several such linear and non-linear design {\em optimality criteria} developed in literature \cite{pukelsheim2006optimal}. Our proposed MEP-based methodology easily extends to these optimality criteria. For instance, consider the D-optimality design criteria that minimizes the determinant of the variance in estimation of $\beta$ $(\Sigma^{-1})$, i.e. it solves the objective $\min_{\mathbf{s}\in S,\mathbf{m}\in M}\big(\text{det}(X(\mathbf{m})^\top\Lambda(\mathbf{s})X(\mathbf{m})^\top)\big)^{-1/p}$ \cite{pukelsheim2006optimal}. Similar to the case of A-optimality illustrated in the Section \ref{section: Problem Formulation}, here we again reformulate the D-optimaltiy criteria as $\min_{\mathbf{\eta},\mathbf{m}\in M}\big(\text{det}(\sum_{\mathbf{s}\in S}\eta(\s)X(\mathbf{m})^\top\Lambda(\mathbf{\s})X(\mathbf{m})^\top)\big)^{-1/p}$ in terms of the binary decision variable $\eta(\mathbf{s})$. The subsequent solution approach to determine the set of experiments $\mathbf{s}$, and the feature data imputations $\mathbf{m}$ that minimize D-optimality remains similar to the one illustrated in the Section \ref{section: Solution Approach}.

\subsection{Flexibility to incorporate constraints}
Various application areas involving design of experiments require efficient utilization of resources; resulting into several capacity, and feasibility based constraints in the optimization problem (\ref{Eq: original objective}). For instance, consider a scenario where the cost vector $\mathbf{c_i}:=(\mathbf{c_i^1},\hdots,\mathbf{c_i^p})\in \mathbb{R}^p$ indicates the cost (related to individual features) incurred  in performing the $i$-th experiment in $X\in\mathbb{R}^{n\times p}$. The limited budget constraint on the resources pose the constraint of the form $\sum_i\mathbf{c_i}s_i = \mathbf{\kappa}$, where $s_i\in\{0,1\}$ indicates selection of an experiment, and $\mathbf{\kappa}:=(\mathbf{\kappa_1},\hdots,\mathbf{\kappa_p})\in\mathbb{R}^p$ denotes the maximum budget available for the individual features of the experiment. Our framework is flexible, and easily incorporates such constraints. More precisely, we re-interpret the above budget constraint as $\sum_i \mathbf{c_i}q_i = \mathbf{\kappa}$ in terms of our decision variable $q_i:=p_i(s_i=1)$. With this constraint introduced into the optimization problem (\ref{eq:MEP}), the augmented Lagrangian $\mathcal{L}_1$ is given by
\begin{align}\label{eq: augmented_Lag}
\mathcal{L}_1 &= \text{Trace}(\sum_i^n q_ix_ix_i^\top)^{-1}+\mu(\sum_iq_i-r)+T\sum_{i} \big[q_{i}\log q_{i}\big]\nonumber\\ 
&+ T\big[(1-q_{i})\log (1-q_{i})\big]
    + \nu^\top(\sum_i \mathbf{c_i}q_i - \mathbf{\kappa}),
\end{align}
where $\nu\in\mathbb{R}^p$ is the Lagrange parameter. Minimizing $\mathcal{L}_1$ with respect to $q_i$ results into
\begin{align}\label{eq: new_q_const}
&q_i = \frac{1}{1+\exp\Big\{-\frac{1}{T}\big(x_i^\top R^{-2}x_i-\mu-\sum_j \nu_j\mathbf{c_i^j}\big)\Big\}}\\
&~= \frac{\eta_l}{\eta_l+\exp\Big\{-\frac{1}{T}\big(x_i^\top R^{-2}x_i-\mu-\sum_{j}\nu_j\mathbf{c_i^j}+\nu_l\big)\Big\}}\nonumber,
\end{align}
where $\eta_l:=\exp(-\nu_l/T)$. Noting that the desired budget constraint for the $j$-th feature is given by $\sum_i\mathbf{c_i^j}q_i=\mathbf{\kappa_j}$, the update equation for $\eta_l$ is obtained by substituting $q_i$ in the above constraint, i.e., we obtain the update equation
\begin{align}\label{eq: budget_upd}
\eta_l = \frac{\mathbf{\kappa_j}}{\sum_i \mathbf{c_i}^j\frac{1}{\eta_l+\eta_l\exp\Big\{-\frac{1}{T}\big(x_i^\top R^{-2}x_i-\mu-\sum_{j}\nu_j\mathbf{c_i^j}\big)\Big\}}}.
\end{align}
As illustrated in Section \ref{section: Solution Approach}, we can deterministically optimize the Lagrangian $\mathcal{L}_1$ in (\ref{eq: augmented_Lag}) at successively decreasing values of the annealing temperature $T$ by alternating between the equations (\ref{eq: new_q_const}), (\ref{eq: mu update}), and (\ref{eq: budget_upd}) until convergence. {Subsequently, as in Algorithm \ref{alg: Algorithm1} we impute the missing data (if any) $\mathbf{m}$ that minimize (local) $\mathcal{L}_1$ in (\ref{eq: augmented_Lag}) using optimization routines like interior-points method.} As a part of our ongoing research, we are working towards (a) a proof of convergence of the above iterates, and (b) extending our framework to incorporate inequality constraints in the optimization problem (\ref{Eq: original objective}).
\begin{appendices}
\section{}\label{app: First}
{\bf Proof of Theorem 1.} From the Lagrangian L given by (\ref{eq: final lag}), we get
\begin{align}\label{eq: Dlag}
    \frac{\partial L}{\partial q_i}=T\log(\frac{q_i}{1-q_i})+\mu-x_i^\top R^{-2}x_i
\end{align}
Let $\xi_i=-\log(\frac{q_i}{1-q_i})\implies q_i=\frac{\exp(-\xi_i)}{1+\exp(-\xi_i)}$. Hence, from (\ref{eq: Dlag}), the update for $\xi_i$ in each iteration is
\begin{align}
    \xi_i^+=-\frac{1}{T}(x_i^\top R^{-2}x_i-\mu)
\end{align}
Further,
\begin{align}
    \frac{\partial L}{\partial q_i}=\frac{\partial L}{\partial \xi_i}\frac{\partial \xi_i}{\partial q_i}&=-\frac{\partial L}{\partial \xi_i}\big[\frac{1}{q_i}+\frac{1}{1-q_i}\big]\\&=-\frac{\partial L}{\partial \xi_i}\big[\exp(-\xi_i/2)+\exp(\xi_i/2)\big]^2\label{eq: xistep}.
\end{align}
It also follows from (\ref{eq: Dlag}) that
\begin{align}\label{eq: xiDlagq}
    \frac{\partial L}{\partial q_i}=-T\xi_i+T\xi_i^+
\end{align}
From (\ref{eq: xistep}) and (\ref{eq: xiDlagq}),
\begin{align}
    \xi_i^+=\xi_i-T\big[\exp(-\xi_i/2)+\exp(\xi_i/2)\big]^2\frac{\partial L}{\partial \xi_i}\label{eq: xistep}.
\end{align}
Similarly, for $\mu$ using (\ref{eq: mu update}) we directly arrive at
\begin{align}
    \mu^+&=\mu-T\log(\frac{\sum_i q_i}{r})\\
        &=\mu-T\frac{\log(\sum_iq_i)-\log r}{\sum_i q_i-r}(\sum_iq_i-r)\\
        &=\mu-T\frac{\log(\sum_iq_i)-\log r}{\sum_i q_i-r}\frac{\partial L}{\partial \mu}\label{eq: mustep}
\end{align}
Since $f(x)=\log(x)$ satisfies conditions for mean value theorem between $(\sum_i q_i,r)$, there exists $\Bar{k}\in(\sum_i q_i,r)$ such that $\frac{1}{\Bar{k}}=\frac{\log(\sum_iq_i)-\log r}{\sum_i q_i-r}$. Since $\sum_iq_i>0$ and $r>0$, $\Bar{k}>0$.
\end{appendices}
\bibliographystyle{IEEEtran}
\bibliography{IEEEabrv}

\begin{thebibliography}{10}
\providecommand{\url}[1]{#1}
\csname url@samestyle\endcsname
\providecommand{\newblock}{\relax}
\providecommand{\bibinfo}[2]{#2}
\providecommand{\BIBentrySTDinterwordspacing}{\spaceskip=0pt\relax}
\providecommand{\BIBentryALTinterwordstretchfactor}{4}
\providecommand{\BIBentryALTinterwordspacing}{\spaceskip=\fontdimen2\font plus
\BIBentryALTinterwordstretchfactor\fontdimen3\font minus
  \fontdimen4\font\relax}
\providecommand{\BIBforeignlanguage}[2]{{%
\expandafter\ifx\csname l@#1\endcsname\relax
\typeout{** WARNING: IEEEtran.bst: No hyphenation pattern has been}%
\typeout{** loaded for the language `#1'. Using the pattern for}%
\typeout{** the default language instead.}%
\else
\language=\csname l@#1\endcsname
\fi
#2}}
\providecommand{\BIBdecl}{\relax}
\BIBdecl

\bibitem{durakovic2013continuous}
B.~Durakovic and H.~Basic, ``Continuous quality improvement in textile
  processing by statistical process control tools: A case study of medium-sized
  company,'' \emph{Periodicals of Engineering and Natural Sciences}, vol.~1,
  no.~1, 2013.

\bibitem{yu2018design}
P.~Yu, M.~Y. Low, and W.~Zhou, ``Design of experiments and regression modelling
  in food flavour and sensory analysis: A review,'' \emph{Trends in Food
  Science \& Technology}, vol.~71, pp. 202--215, 2018.

\bibitem{schlueter2018linking}
A.~Schlueter and P.~Geyer, ``Linking bim and design of experiments to balance
  architectural and technical design factors for energy performance,''
  \emph{Automation in Construction}, vol.~86, pp. 33--43, 2018.

\bibitem{paulo2017design}
F.~Paulo and L.~Santos, ``Design of experiments for microencapsulation
  applications: A review,'' \emph{Materials Science and Engineering: C},
  vol.~77, pp. 1327--1340, 2017.

\bibitem{pukelsheim2006optimal}
F.~Pukelsheim, \emph{Optimal design of experiments}.\hskip 1em plus 0.5em minus
  0.4em\relax SIAM, 2006.

\bibitem{settles2012active}
B.~Settles, ``Active learning,'' \emph{Synthesis lectures on artificial
  intelligence and machine learning}, vol.~6, no.~1, pp. 1--114, 2012.

\bibitem{deshpande2012linear}
Y.~Deshpande and A.~Montanari, ``Linear bandits in high dimension and
  recommendation systems,'' in \emph{2012 50th Annual Allerton Conference on
  Communication, Control, and Computing (Allerton)}.\hskip 1em plus 0.5em minus
  0.4em\relax IEEE, 2012, pp. 1750--1754.

\bibitem{kulesza2012determinantal}
A.~Kulesza and B.~Taskar, ``Determinantal point processes for machine
  learning,'' \emph{arXiv preprint arXiv:1207.6083}, 2012.

\bibitem{montgomery2021introduction}
D.~C. Montgomery, E.~A. Peck, and G.~G. Vining, \emph{Introduction to linear
  regression analysis}.\hskip 1em plus 0.5em minus 0.4em\relax John Wiley \&
  Sons, 2021.

\bibitem{jiao2020does}
S.~Jiao, Y.~Gao, J.~Feng, T.~Lei, and X.~Yuan, ``Does deep learning always
  outperform simple linear regression in optical imaging?'' \emph{Optics
  express}, vol.~28, no.~3, pp. 3717--3731, 2020.

\bibitem{lu2017application}
M.~Lu, J.~Zhou, C.~Naylor, B.~D. Kirkpatrick, R.~Haque, W.~A. Petri, and J.~Z.
  Ma, ``Application of penalized linear regression methods to the selection of
  environmental enteropathy biomarkers,'' \emph{Biomarker research}, vol.~5,
  no.~1, pp. 1--10, 2017.

\bibitem{rissanen1996fisher}
J.~J. Rissanen, ``Fisher information and stochastic complexity,'' \emph{IEEE
  transactions on information theory}, vol.~42, no.~1, pp. 40--47, 1996.

\bibitem{civril2009selecting}
A.~Civril and M.~Magdon-Ismail, ``On selecting a maximum volume sub-matrix of a
  matrix and related problems,'' \emph{Theoretical Computer Science}, vol. 410,
  no. 47-49, pp. 4801--4811, 2009.

\bibitem{yick2008wireless}
J.~Yick, B.~Mukherjee, and D.~Ghosal, ``Wireless sensor network survey,''
  \emph{Computer networks}, vol.~52, no.~12, pp. 2292--2330, 2008.

\bibitem{lo2011progressive}
C.-H. Lo and N.~Ansari, ``The progressive smart grid system from both power and
  communications aspects,'' \emph{IEEE Communications Surveys \& Tutorials},
  vol.~14, no.~3, pp. 799--821, 2011.

\bibitem{fletcher2004estimation}
A.~K. Fletcher, S.~Rangan, and V.~K. Goyal, ``Estimation from lossy sensor
  data: Jump linear modeling and kalman filtering,'' in \emph{Proceedings of
  the 3rd international symposium on Information processing in sensor
  networks}, 2004, pp. 251--258.

\bibitem{raghunathan2004we}
T.~E. Raghunathan, ``What do we do with missing data? some options for analysis
  of incomplete data,'' \emph{Annu. Rev. Public Health}, vol.~25, pp. 99--117,
  2004.

\bibitem{kromrey1994nonrandomly}
J.~D. Kromrey and C.~V. Hines, ``Nonrandomly missing data in multiple
  regression: An empirical comparison of common missing-data treatments,''
  \emph{Educational and Psychological Measurement}, vol.~54, no.~3, pp.
  573--593, 1994.

\bibitem{ayilara2019impact}
O.~F. Ayilara, L.~Zhang, T.~T. Sajobi, R.~Sawatzky, E.~Bohm, and L.~M. Lix,
  ``Impact of missing data on bias and precision when estimating change in
  patient-reported outcomes from a clinical registry,'' \emph{Health and
  quality of life outcomes}, vol.~17, no.~1, pp. 1--9, 2019.

\bibitem{marlin2008missing}
B.~Marlin, ``Missing data problems in machine learning,'' Ph.D. dissertation,
  2008.

\bibitem{exchangealgo}
\BIBentryALTinterwordspacing
M.~Saleh and R.~Pan, ``Constructing efficient experimental designs for
  generalized linear models,'' \emph{Communications in Statistics - Simulation
  and Computation}, vol.~45, no.~8, pp. 2827--2845, 2016. [Online]. Available:
  \url{https://doi.org/10.1080/03610918.2014.927486}
\BIBentrySTDinterwordspacing

\bibitem{heredia2003genetic}
A.~Heredia-Langner, W.~M. Carlyle, D.~C. Montgomery, C.~M. Borror, and G.~C.
  Runger, ``Genetic algorithms for the construction of d-optimal designs,''
  \emph{Journal of quality technology}, vol.~35, no.~1, pp. 28--46, 2003.

\bibitem{miller1994algorithm}
A.~J. Miller and N.-K. Nguyen, ``Algorithm as 295: A fedorov exchange algorithm
  for d-optimal design,'' \emph{Journal of the royal statistical society.
  series c (applied statistics)}, vol.~43, no.~4, pp. 669--677, 1994.

\bibitem{pmlr-v70-allen-zhu17e}
\BIBentryALTinterwordspacing
Z.~Allen-Zhu, Y.~Li, A.~Singh, and Y.~Wang, ``Near-optimal design of
  experiments via regret minimization,'' in \emph{Proceedings of the 34th
  International Conference on Machine Learning}, ser. Proceedings of Machine
  Learning Research, D.~Precup and Y.~W. Teh, Eds., vol.~70.\hskip 1em plus
  0.5em minus 0.4em\relax International Convention Centre, Sydney, Australia:
  PMLR, 06--11 Aug 2017, pp. 126--135. [Online]. Available:
  \url{http://proceedings.mlr.press/v70/allen-zhu17e.html}
\BIBentrySTDinterwordspacing

\bibitem{pmlr-v48-ravi16}
\BIBentryALTinterwordspacing
S.~N. Ravi, V.~Ithapu, S.~Johnson, and V.~Singh, ``Experimental design on a
  budget for sparse linear models and applications,'' in \emph{Proceedings of
  The 33rd International Conference on Machine Learning}, ser. Proceedings of
  Machine Learning Research, M.~F. Balcan and K.~Q. Weinberger, Eds.,
  vol.~48.\hskip 1em plus 0.5em minus 0.4em\relax New York, New York, USA:
  PMLR, 20--22 Jun 2016, pp. 583--592. [Online]. Available:
  \url{http://proceedings.mlr.press/v48/ravi16.html}
\BIBentrySTDinterwordspacing

\bibitem{5671202}
A.~{Deshpande} and L.~{Rademacher}, ``Efficient volume sampling for row/column
  subset selection,'' in \emph{2010 IEEE 51st Annual Symposium on Foundations
  of Computer Science}, 2010, pp. 329--338.

\bibitem{li2017polynomial}
C.~Li, S.~Jegelka, and S.~Sra, ``Polynomial time algorithms for dual volume
  sampling,'' 2017.

\bibitem{avron2013faster}
H.~Avron and C.~Boutsidis, ``Faster subset selection for matrices and
  applications,'' \emph{SIAM Journal on Matrix Analysis and Applications},
  vol.~34, no.~4, pp. 1464--1499, 2013.

\bibitem{JMLR:v18:17-175}
\BIBentryALTinterwordspacing
Y.~Wang, A.~W. Yu, and A.~Singh, ``On computationally tractable selection of
  experiments in measurement-constrained regression models,'' \emph{Journal of
  Machine Learning Research}, vol.~18, no. 143, pp. 1--41, 2017. [Online].
  Available: \url{http://jmlr.org/papers/v18/17-175.html}
\BIBentrySTDinterwordspacing

\bibitem{nikolov2019proportional}
A.~Nikolov, M.~Singh, and U.~T. Tantipongpipat, ``Proportional volume sampling
  and approximation algorithms for a-optimal design,'' in \emph{Proceedings of
  the Thirtieth Annual ACM-SIAM Symposium on Discrete Algorithms}.\hskip 1em
  plus 0.5em minus 0.4em\relax SIAM, 2019, pp. 1369--1386.

\bibitem{derezinski2018leveraged}
M.~Derezi{\'n}ski, M.~K. Warmuth, and D.~Hsu, ``Leveraged volume sampling for
  linear regression,'' \emph{arXiv preprint arXiv:1802.06749}, 2018.

\bibitem{eekhout2014missing}
I.~Eekhout, H.~C. de~Vet, J.~W. Twisk, J.~P. Brand, M.~R. de~Boer, and M.~W.
  Heymans, ``Missing data in a multi-item instrument were best handled by
  multiple imputation at the item score level,'' \emph{Journal of clinical
  epidemiology}, vol.~67, no.~3, pp. 335--342, 2014.

\bibitem{zhang2016missing}
Z.~Zhang, ``Missing data imputation: focusing on single imputation,''
  \emph{Annals of translational medicine}, vol.~4, no.~1, 2016.

\bibitem{malhotra1987analyzing}
N.~K. Malhotra, ``Analyzing marketing research data with incomplete information
  on the dependent variable,'' \emph{Journal of Marketing Research}, vol.~24,
  no.~1, pp. 74--84, 1987.

\bibitem{dempstermaximal}
A.~Dempster, N.~Laird, and D.~Rubin, ``Maximal likelihood form incomplete data
  via the em algorithm,'' \emph{J. Roy. Statist. Soc}, pp. 1--38.

\bibitem{nguyen1992review}
N.-K. Nguyen and A.~J. Miller, ``A review of some exchange algorithms for
  constructing discrete d-optimal designs,'' \emph{Computational Statistics \&
  Data Analysis}, vol.~14, no.~4, pp. 489--498, 1992.

\bibitem{jaynes1957information}
E.~T. Jaynes, ``Information theory and statistical mechanics,'' \emph{Physical
  review}, vol. 106, no.~4, p. 620, 1957.

\bibitem{rose1998deterministic}
K.~Rose, ``Deterministic annealing for clustering, compression, classification,
  regression, and related optimization problems,'' \emph{Proceedings of the
  IEEE}, vol.~86, no.~11, pp. 2210--2239, 1998.

\bibitem{dubey2018maximum}
A.~Dubey, O.~Gupta, R.~Raskar, and N.~Naik, ``Maximum-entropy fine-grained
  classification,'' \emph{arXiv preprint arXiv:1809.05934}, 2018.

\bibitem{gehler2007deterministic}
P.~V. Gehler and O.~Chapelle, ``Deterministic annealing for multiple-instance
  learning,'' in \emph{Artificial Intelligence and Statistics}.\hskip 1em plus
  0.5em minus 0.4em\relax PMLR, 2007, pp. 123--130.

\bibitem{rao1997design}
A.~Rao, K.~Rose, and A.~Gersho, ``Design of robust hmm speech recognizers using
  deterministic annealing,'' in \emph{1997 IEEE Workshop on Automatic Speech
  Recognition and Understanding Proceedings}.\hskip 1em plus 0.5em minus
  0.4em\relax IEEE, 1997, pp. 466--473.

\bibitem{srivastava2020simultaneous}
A.~Srivastava and S.~M. Salapaka, ``Simultaneous facility location and path
  optimization in static and dynamic networks,'' \emph{IEEE Transactions on
  Control of Network Systems}, vol.~7, no.~4, pp. 1700--1711, 2020.

\bibitem{gull1984maximum}
S.~F. Gull and J.~Skilling, ``Maximum entropy method in image processing,'' in
  \emph{IEE Proceedings F (Communications, Radar and Signal Processing)}, vol.
  131, no.~6.\hskip 1em plus 0.5em minus 0.4em\relax IET, 1984, pp. 646--659.

\bibitem{baranwal2019multiway}
M.~Baranwal, A.~Srivastava, and S.~M. Salapaka, ``Multiway k-cut in static and
  dynamic graphs: A maximum entropy principle approach,'' in \emph{2019 IEEE
  58th Conference on Decision and Control (CDC)}.\hskip 1em plus 0.5em minus
  0.4em\relax IEEE, 2019, pp. 3948--3953.

\bibitem{banerjee2007generalized}
A.~Banerjee, I.~Dhillon, J.~Ghosh, S.~Merugu, and D.~S. Modha, ``A generalized
  maximum entropy approach to bregman co-clustering and matrix approximation,''
  \emph{The Journal of Machine Learning Research}, vol.~8, pp. 1919--1986,
  2007.

\bibitem{xu2014aggregation}
Y.~Xu, S.~M. Salapaka, and C.~L. Beck, ``Aggregation of graph models and markov
  chains by deterministic annealing,'' \emph{IEEE Transactions on Automatic
  Control}, vol.~59, no.~10, pp. 2807--2812, 2014.

\bibitem{byrd1999interior}
R.~H. Byrd, M.~E. Hribar, and J.~Nocedal, ``An interior point algorithm for
  large-scale nonlinear programming,'' \emph{SIAM Journal on Optimization},
  vol.~9, no.~4, pp. 877--900, 1999.

\end{thebibliography}

\end{document}